\newcommand{\red}{\color{red}}
\newcommand{\R}{{\mathbb R}}
\newcommand{\N}{{\mathbb N}}
\def\moverlay{\mathpalette\mov@rlay}
\def\mov@rlay#1#2{\leavevmode\vtop{%
   \baselineskip\z@skip \lineskiplimit-\maxdimen
   \ialign{\hfil$\m@th#1##$\hfil\cr#2\crcr}}}
\newcommand{\charfusion}[3][\mathord]{
    #1{\ifx#1\mathop\vphantom{#2}\fi
        \mathpalette\mov@rlay{#2\cr#3}
      }
    \ifx#1\mathop\expandafter\displaylimits\fi}
\newcommand{\bigcupdot}{\charfusion[\mathop]{\bigcup}{\cdot}}
\newcommand{\FF}{\mathcal{F}}
\DeclareMathOperator{\Dim}{dim}
\DeclareMathOperator{\sgn}{sgn}
\DeclareMathOperator{\inter}{int}
\DeclareMathOperator{\relint}{relint}
\DeclareMathOperator{\aff}{aff}
\DeclareMathOperator{\BL}{L}
\DeclareMathOperator{\pos}{pos}
\DeclareMathOperator{\ext}{ext}
\newcommand{\I}{\mathbbm{1}}
\theoremstyle{plain}
\newtheorem{theorem}{Theorem}[section]
\newtheorem{lemma}[theorem]{Lemma}
\newtheorem{claim}[theorem]{Claim}
\theoremstyle{definition}
\theoremstyle{remark}
\begin{document}

\author{Daniel Hug}
\address{Daniel Hug, Karlsruhe Institute of Technology (KIT),
Department of Mathematics, Englerstr.\ 2,
D-76128 Karlsruhe, Germany}
\email{daniel.hug@kit.edu}

\author{Zakhar Kabluchko}
\address{Zakhar Kabluchko, Institut f\"ur Mathematische Statistik,
Universit\"at M\"unster,
Or\-l\'e\-ans--Ring 10,
48149 M\"unster, Germany}
\email{zakhar.kabluchko@uni-muenster.de}

\title[Inclusion-exclusion identity for normal cones]{An inclusion-exclusion identity for normal cones of polyhedral sets}

\keywords{Inclusion-exclusion identity, normal cone, Euler relation, convex set, polytope, polyhedral set, convex cone}

\subjclass[2010]{Primary: 52A20, 52B11; secondary: 52A55}

\date{December 23, 2016}

\begin{abstract}
For a nonempty polyhedral set $P\subset \R^d$, let $\FF(P)$ denote the set of faces of $P$,
and let $N(P,F)$ be the normal cone of $P$ at the nonempty face $F\in\FF(P)$. We prove the identity
$$
\sum_{F\in\FF(P)}(-1)^{\Dim F}\I_{F-N(P,F)} =
\begin{cases}
1, &\text{if $P$ is bounded},\\
0, &\text{if $P$ is unbounded and line-free}.
\end{cases}
$$
Previously, this formula was known to hold everywhere outside some exceptional set of Lebesgue measure $0$
or for polyhedral cones.
The case of a not necessarily line-free polyhedral set is also covered by our general theorem.
\end{abstract}
\maketitle

\section{Introduction and statement of the result}
\subsection{Polyhedral sets}
A \emph{polyhedral set} $P$ in  $\R^d$  is an intersection of finitely many closed half-spaces. That is, $P$ is a closed convex set which can be represented as
\begin{equation}\label{eq:P_def}
P=\{x\in\R^d \colon M x \leq b\},
\end{equation}
where $M$ is an $m\times d$-matrix and $b\in \R^m$.
A bounded polyhedral set is called a \emph{polytope}. A \emph{polyhedral cone} is an intersection of finitely many closed half-spaces whose boundaries contain the origin. If not otherwise stated, polyhedral sets, cones and polytopes are assumed to be nonempty.

A polyhedral set $P$ is called \emph{line-free} if it does not contain a line, i.e.\ a set of the form $\{x+\lambda y\colon \lambda\in\R\}$, $x\in\R^d$, $y\in \R^d\setminus\{0\}$.  In general, the \emph{lineality space} of $P$, represented in the form~\eqref{eq:P_def}, is $U_P=\{x\in\R^d \colon M x = 0\}$. Then, $P$ is line-free if its lineality space is $\{0\}$, or, equivalently,  if the matrix $M$ has rank $d$.  Every polyhedral set has an orthogonal decomposition $P=P_0+U_P$, where $P_0$ is a line-free polyhedral set; see~\cite[Section~7.2]{Pad} or \cite[Lemma 1.4.2]{S14}.

The finite set of faces of a polyhedral set $P$ is denoted by $\FF(P)$, which includes $P$ itself but does not include the empty face. For the notion and basic properties of a face of a polyhedral set we refer to~\cite[Section~7.2.1]{Pad} or \cite{Rock} and, for general convex sets, to  \cite[\S 2.1 and \S 2.4]{S14}. The face structure of a polyhedral set is much simpler than that of a general convex set. In particular, faces are always support sets.  The {\em normal cone} of $P$ at a point $x\in P$ is defined as
\begin{equation}\label{eq:normal_cone_def}
N(P,x)=\{u\in\R^d: \langle u,z-x\rangle \le 0 \text{ for all }z\in P\},
\end{equation}
where $\langle\cdot\,,\cdot\rangle$ denotes the scalar product of the underlying Euclidean space $\R^d$.
For a face $F\in \FF(P)$, the normal cone $N(P,x)$ does not depend on the choice of a point $x\in \relint F$ (the relative interior of $F$), hence it is denoted by  $N(P,F)$ and referred to as the normal cone of $P$ at the face $F$.
Note that $N(P,F)$ is a closed polyhedral cone and $N(P,P) = \{0\}$ if $\text{dim}(P)=d$.
In general, $N(P,P)=\BL(P)^\perp$, where $\BL(P)$ is the linear subspace parallel to the affine hull $\aff(P)$ of $P$.

\subsection{Statement of the result} The indicator function of a set $A\subset \R^d$ is denoted by $\I_A$.
For two sets $A,B \subset \R^d$ let
$$
A+B= \{a+b\colon a\in A, b\in B\}, \quad A-B= \{a-b\colon a\in A, b\in B\}.
$$
The aim of the present note is to prove the following Euler-type inclusion-exclusion relation.
\begin{theorem}\label{theo:main}
Let $P\subset\R^d$ be a polyhedral set. Then, the function $\varphi_P:\R^d\to\mathbb{Z}$ defined by
$$
\varphi_P:=\sum_{F\in\FF(P)}(-1)^{\Dim F}\I_{F-N(P,F)}
$$
is constant and takes values in $\{-1,0,1\}$. If $P = P_0 + U_P$, where $U_P$ is the lineality space of $P$ and $P_0$ is a line-free polyhedral set, then  $\varphi_P\equiv (-1)^{\Dim U_P}$ if $P_0$ is bounded and $\varphi_P\equiv 0$ if
$P_0$ is unbounded.
\end{theorem}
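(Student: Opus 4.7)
\emph{Proof plan.} The plan is to reduce to the line-free case via the lineality decomposition, and then prove the line-free case by upgrading the previously known a.e.\ (or polyhedral cone) identity to a pointwise statement on all of $\R^d$.

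\emph{Reduction to the line-free case.} For $P = P_0 + U_P$, each face of $P$ has the form $F = F_0 + U_P$ for a unique $F_0 \in \FF(P_0)$, with $\Dim F = \Dim F_0 + \Dim U_P$. Translation invariance of $P$ along $U_P$ forces $N(P,F) \subseteq U_P^\perp$, and regarding $P_0$ intrinsically in $U_P^\perp$, $N(P,F)$ coincides with the normal cone of $P_0$ at $F_0$ in $U_P^\perp$. Splitting $x = x' + u$ with $x' \in U_P^\perp$ and $u \in U_P$, a direct check gives $x \in F - N(P,F)$ if and only if $x' \in F_0 - N(P_0,F_0)$. Summing over faces yields
\[
\varphi_P(x) = (-1)^{\Dim U_P}\,\varphi_{P_0}(x'),
\]
so the theorem reduces to proving $\varphi_{P_0}\equiv 1$ for bounded line-free $P_0$ and $\varphi_{P_0}\equiv 0$ for unbounded line-free $P_0$.

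\emph{Line-free case via a polyhedral arrangement.} Each set $F - N(P_0,F)$ is a closed polyhedral subset of $\R^d$, so the collection of their topological boundaries induces a polyhedral arrangement of $\R^d$ whose open full-dimensional cells support every $\I_{F-N(P_0,F)}$ as a locally constant function. The previously known a.e.\ identity then fixes $\varphi_{P_0}$ to the claimed constant value on every such cell, leaving the task of verifying the same value on the $(d-1)$-skeleton. The plan is wall-by-wall: across each wall $W$ of the arrangement, the only summands whose indicator can jump are those $F$ with $W\subseteq\partial(F-N(P_0,F))$, and at a point $x_W\in W$ the total jump must cancel. Concretely, this amounts to proving a local Euler relation $\sum_F(-1)^{\Dim F}=0$ over the ``active'' sub-poset of faces $F$ with $W\subseteq\partial(F-N(P_0,F))$ and $x_W\in F-N(P_0,F)$.

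\emph{Alternative route and main obstacle.} A potentially cleaner alternative avoids the wall analysis by homogenizing $P_0$ to a line-free polyhedral cone $\widetilde{P_0}\subseteq\R^{d+1}$ and invoking the previously known pointwise identity for polyhedral cones; intersecting with the slice $\{x_{d+1}=1\}$, and using the bijection between $\FF(P_0)$ and those faces of $\widetilde{P_0}$ that meet this slice together with the compatibility of normal cones, one transfers the cone identity to the desired pointwise identity for $P_0$. The apex of $\widetilde{P_0}$ and, when $P_0$ is unbounded, the recession slice $\{x_{d+1}=0\}\cap\widetilde{P_0}$ produce extra faces of $\widetilde{P_0}$ that must be accounted for in the passage back to $P_0$. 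In either approach, the main obstacle is the combinatorial bookkeeping---identifying the active sub-poset at each wall and proving its local Euler relation, or respectively, tracking the extra faces of $\widetilde{P_0}$ and showing that their contributions reconcile with the claimed constant---since this step cannot be supplied by the a.e.\ identity alone.
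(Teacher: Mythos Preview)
Your reduction to the line-free case via the lineality decomposition is correct and essentially matches the paper's Case~3. But what remains is only a plan: you yourself flag the ``main obstacle'' and do not resolve it, and that obstacle is the entire content of the theorem. Appealing to the a.e.\ identity fixes $\varphi_{P_0}$ on the open cells of your arrangement, but the claim that the jumps across each wall cancel via a ``local Euler relation $\sum_F(-1)^{\dim F}=0$ over the active sub-poset'' is exactly the statement that needs proof, and you have not identified what that sub-poset is or why its alternating sum vanishes. The homogenization alternative is in the same state: even granting the pointwise cone identity (which was established by Schneider essentially simultaneously with this paper, so is not really ``previously known''), the extra faces of $\widetilde{P_0}$---the apex and, when $P_0$ is unbounded, all faces of the recession cone at height $0$---contribute terms with no counterpart in $\varphi_{P_0}$, and you have not shown these reconcile with the claimed constant.

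The missing idea, which the paper supplies, is a precise structural description of the active faces at a boundary point $x$. One shows that the faces $F$ with $x\in\partial(F-N(P,F))$ decompose into \emph{disjoint intervals} $I(G,H)=\{F:G\subseteq F\subseteq H\}$, one for each pair $(G,H)$ with $G\subsetneq H$ and $x\in\relint G-\relint N(P,H)$; disjointness follows because $\BL(F)$ and $\BL(N(P,F))$ are complementary, forcing a unique splitting of $x$. The contribution $\varphi_{G,H}$ of each interval is then localized: near $x$ it coincides with $\varphi_{H^*}$ for a polyhedral cone $H^*$ living in the subspace $L_3=\BL(G)^\perp\cap\BL(H)$ of dimension $\dim H-\dim G<d$. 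This reduces the problem to cones in strictly lower dimension and yields an induction on $d$, with the base case supplied by the classical Euler relation for cones. Your wall-by-wall heuristic is pointing in the right direction, but without the interval decomposition and the localization-to-a-lower-dimensional-cone step it cannot be completed.
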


In particular, if $P$ is a polytope, then $\varphi_P\equiv 1$, whereas for an unbounded line-free polyhedral set we have $\varphi_P\equiv 0$.

\subsection{Comments}
Previously, the function $\varphi_P$ was known to be constant everywhere outside some exceptional set
of Lebesgue measure $0$. For polyhedral cones, this statement was given without proof by McMullen in~\cite[p.~249]{McM}.
Proofs for polyhedral cones can be found in the book of Schneider and Weil~\cite{SW}, see the proof of Theorem~6.5.5  there, and in the PhD Thesis of Glasauer~\cite{GlasPhD}, see Hilfssatz 4.3.2 there. Both proofs follow an approach suggested by McMullen. It was conjectured in~\cite{GlasPhD} that the formula should hold without the need to exclude an  exceptional set. A proof of the formula for polytopes was given by Glasauer in~\cite{Glas}.  An extension to Minkowski geometry with a general convex gauge body was obtained by Hug~\cite[Corollary~2.25 on p.~89]{Hug}. In all these results, an exceptional set of measure $0$ is present. Our contribution is to remove such an exceptional set. Shortly before this paper was completed, we became aware of the preprint of Schneider~\cite{Schneider}, where the exceptional set was removed for polyhedral cones. The method used in the present paper is different from Schneider's approach and yields a result valid in the more general setting of polyhedral sets. In fact, reducing the general case of polyhedral sets to polyhedral cones is the core of our proof. After such a reduction has been accomplished, the result for cones follows quickly.

The starting point of the present paper was the observation that the relations stated in Theorem~\ref{theo:main} are  similar to the inclusion-exclusion identities for convex hulls obtained by Cowan~\cite{Cow1,Cow2}. Cowan proved his identities outside  an exceptional set of measure zero.  Recently, the exceptional set was removed in~\cite{KLZ}. The method of~\cite{KLZ}  (based on an extended Euler relation) is quite different from the approach of the present paper.

Let us mention two ``topological'' interpretations of Theorem~\ref{theo:main}. It is well known that
$$
\sum_{F\in \FF(P)} \I_{\relint F + N(P,F)} = 1.
$$
Consider a piecewise linear map $\Psi:\R^d \to\R^d$ defined by $f(x+u) = x-u$, where $x\in \relint F$, $u\in N(P,F)$. This map reflects each set  $\relint F+N(P,F)$ at the corresponding face $F$. The restriction of $\Psi$ to $P$ is the identity map. The map $\Psi$  is continuous but not everywhere smooth. Let $J_\Psi$ be the Jacobian of $\Psi$ (whenever it is defined).  For the ``regular values'' $z$ which do not belong to the boundaries of the sets $F-N(P,F)$, we may define the  degree of $\Psi$ as
\begin{align*}
(\deg \Psi) (z) &=  \sum_{a\in \Psi^{-1} (z)} \sgn \det J_\Psi(a) =
 \sum_{F\in\FF (P)} (-1)^{d-\dim F} \I_{F-N(P,F)}(z) \\
&= (-1)^d \varphi_P(z).
\end{align*}
To give another interpretation, consider the normal bundle of $P$ defined as
$$
\text{NB}(P) = \bigcup_{F\in\FF (P)} \{(x,u)\colon x\in F, u\in N(P,F)\} \subset \R^d \times \R^d.
$$
This is the union set of the normal manifold (normal fan) as defined
in \cite{Robinson92,Scholtes94,Ziegler96,LuRobinson08,Scholtes2012}.
For regular values $z$ one can interpret $\varphi_P(z)$ as the ``intersection index'' between $\text{NB}(P)$ and the $d$-dimensional linear subspace
$$
L_z = \{(x,y)\in \R^d \times \R^d \colon x-y = z\}.
$$
Comparing to classical results in differentiable topology, it is not surprising that $\varphi_P(z)$ stays constant for regular points $z\in\R^d$. The main contribution of the present note is the analysis of the non-regular values.

\subsection{Notation}
The rest of the paper is devoted to the proof of Theorem~\ref{theo:main}. Let us fix some notation. Let $\inter K$ be the interior of a set $K$ and $\partial K$ its boundary.  Let $\relint K$ be the relative interior of a set $K$, that is the interior with respect to its affine hull $\aff K$. If $A\subset\R^d$, then we write $\text{conv}(A)$ for the convex hull  and $\text{pos}(A)$ for the positive hull of $A$.
For a convex set $K\subset \R^d$, let $\BL(K)$ denote the linear subspace parallel to the
affine subspace $\aff K$. Let $B(z,\varepsilon) = \{y\in \R^d\colon \|z-y\| \le \varepsilon\}$ be the closed ball of radius $\varepsilon$ around $z\in\R^d$, where $\|\cdot\|$ denotes the Euclidean norm.

\section{Proof of Theorem~\ref{theo:main}}
%\vspace*{2mm}
\noindent
\textsc{Step 0.}
%We prove (a) and (b) since (c) follows from these ones by considering direct products.
We use induction over the dimension $d$.
In Steps 1--7 below, we prove the following two claims.

\begin{claim}[Reduction to cones]\label{claim:1}
If the function $\varphi_P$ is constant for every polyhedral cone $P$, then it is constant for every polyhedral set $P$.
\end{claim}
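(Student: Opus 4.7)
\textbf{Plan for Claim~\ref{claim:1}.} The plan is to establish local constancy of $\varphi_P$ at every point $z\in\R^d$; global constancy then follows from the connectedness of $\R^d$. For each such $z$ the idea is to associate a polyhedral cone $C_z$ whose function $\varphi_{C_z}$, assumed constant by hypothesis, reproduces $\varphi_P$ in a neighborhood of $z$.

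First I would work out the local structure of each set $F-N(P,F)$ at $z$. Because $N(P,F)\subset \BL(F)^\perp$, the representation $z=x-u$ with $x\in F$ and $u\in N(P,F)$, when it exists, is unique: $x$ is the orthogonal projection of $z$ onto $\aff F$ and $u=x-z$. Writing $T(K,w)$ for the tangent cone of a convex set $K$ at $w\in K$, the tangent cone of the polyhedron $F-N(P,F)$ at such a $z$ equals $T(F,x)-T(N(P,F),u)$. Since $F-N(P,F)$ is polyhedral, its indicator near $z$ coincides with that of its tangent cone translated to $z$, so for $y$ in a sufficiently small neighborhood of $0$,
\[
\varphi_P(z+y)=\sum_{F\in\FF(P):\, z\in F-N(P,F)}(-1)^{\Dim F}\I_{T(F,x)-T(N(P,F),u)}(y).
\]

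Next I would try to realize the right-hand side as $\varphi_{C_z}(y)$ for a single polyhedral cone $C_z$: namely, a cone with faces in bijection with the contributing faces of $P$, with matching dimensions, such that $G-N(C_z,G)$ equals the corresponding tangent cone $T(F,x)-T(N(P,F),u)$ under the bijection. Once this is achieved, the hypothesis forces $\varphi_{C_z}$ to be constant, giving local constancy of $\varphi_P$ at $z$, and hence the claim.

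The main obstacle is the construction of $C_z$. In general the collection of tangent cones appearing above does not fit as the face--normal-cone complex of a polyhedral cone in $\R^d$: already for $P=[0,1]\subset\R$ and $z=0$, three faces of $P$ contribute but any polyhedral cone in $\R^1$ admits at most two faces. One is therefore led to work in a higher-dimensional ambient space, for instance via a homogenization of $P$ (letting $\hat P\subset\R^{d+1}$ be the positive hull of $P\times\{1\}$ together with the recession rays of $P$) combined with an appropriate slice or limit, or by directly assembling the local tangent data at $z$ into the face lattice of an abstractly defined cone. Making this construction rigorous and verifying the matching of dimensions and of tangent cones for general (possibly unbounded) polyhedral $P$ -- especially in the presence of nontrivial lineality or recession directions contributing ``at infinity'' to the tangent cones of the $N(P,F)$'s -- is the technical heart of the argument.
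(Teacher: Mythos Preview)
Your opening moves are sound: local constancy suffices, and the tangent-cone formula
\[
\varphi_P(z+y)=\sum_{F:\, z\in F-N(P,F)}(-1)^{\Dim F}\I_{T(F,x_F)-T(N(P,F),u_F)}(y)
\]
for small $y$ is correct. The difficulty you diagnose is also real: there is in general no single polyhedral cone $C_z\subset\R^d$ whose face complex realizes all contributing terms at once. But the workaround you sketch (homogenization, passing to a higher-dimensional cone) is not how the paper proceeds, and you have not shown how to make it work.

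The paper avoids the obstacle by \emph{not} trying to build one cone. Instead, the contributing faces are split into two kinds. First, those $F$ with $z\notin\partial(F-N(P,F))$; their indicators are already locally constant and form a harmless remainder. Second, the remaining $F$ are organized into disjoint \emph{intervals} $I(G,H)=\{F:G\subset F\subset H\}$, one interval for each pair $(G,H)$ with $G\subsetneq H$ and $z\in\relint G-\relint N(P,H)$; these intervals are pairwise disjoint because the decomposition $z=g-v$ with $g\in\relint G$, $v\in\relint N(P,H)$ pins down $(G,H)$ uniquely once any $F$ in between is given. The partial sum over each interval $I(G,H)$ is then matched with $\varphi_{H^*}$ for a cone $H^*$ living in the \emph{lower-dimensional} subspace $L_3=\BL(G)^\perp\cap\BL(H)$, whose faces are exactly the $F^*$ for $F\in I(G,H)$, with $\dim F^*=\dim F-\dim G$. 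This is the missing idea: several small cones, each in a subspace of the right dimension, rather than one big cone for the whole sum. In your $P=[0,1]$, $z=0$ example, the face $\{1\}$ goes into the remainder (since $0\in\inter(\{1\}-N(P,\{1\}))$), while $\{0\}$ and $[0,1]$ form a single interval matched with a one-dimensional cone.
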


\begin{claim}[Induction for cones]\label{claim:2}
If the function $\varphi_P$ is constant for every polyhedral cone $P$ in dimensions $\leq d-1$, then it is constant for every polyhedral cone $P$ in dimension $\le d$.
\end{claim}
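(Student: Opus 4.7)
The plan is to proceed by induction on $d$, the case $d=0$ being immediate. Let $P\subset\R^d$ be a polyhedral cone. Since every face $F\in\FF(P)$ and every normal cone $N(P,F)$ is a cone with apex at the origin, the set $F-N(P,F)$ is as well, so $\varphi_P$ is positively homogeneous of degree zero: $\varphi_P(tz)=\varphi_P(z)$ for every $t>0$.

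Let $E:=\bigcup_{F\in\FF(P)}\partial(F-N(P,F))$, a finite union of polyhedral cones of dimension at most $d-1$. The previously known McMullen--Schneider--Glasauer result for regular points ensures that $\varphi_P$ is constant, equal to some integer $c$, on the dense open set $\R^d\setminus E$. It remains to verify $\varphi_P(z)=c$ for every $z\in E$. The origin admits a separate combinatorial treatment: $0\in F-N(P,F)$ for every face $F$, so $\varphi_P(0)=\sum_{F\in\FF(P)}(-1)^{\Dim F}$, and a standard Euler-type identity for the face lattice of a polyhedral cone identifies this sum with the value of $c$ predicted by the theorem (namely $(-1)^{\Dim U_P}$ when $P=U_P$, and $0$ otherwise).

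For $z\in E\setminus\{0\}$ I would choose a direction $v$ such that $z+\varepsilon v\notin E$ for all sufficiently small $\varepsilon>0$; such $v$ is generic since $E$ is a finite union of cones of dimension $<d$. Using that each $F-N(P,F)$ is closed,
\[
\varphi_P(z)-\varphi_P(z+\varepsilon v)=\sum_{F\in J(z,v)}(-1)^{\Dim F},
\]
where $J(z,v)$ is the set of faces $F$ with $z\in\partial(F-N(P,F))$ for which $v$ points outward. The heart of the argument is to recognise the right-hand side as a value of the analogous functional $\varphi_Q$ attached to some polyhedral cone $Q$ of dimension at most $d-1$, to which the inductive hypothesis applies. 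The natural candidate for $Q$ arises from the ``link'' at $z$ of the arrangement $\{F-N(P,F):F\in\FF(P)\}$: intersecting the tangent cones $T_z(F-N(P,F))$ with a hyperplane transverse to $\pos(z)$ should yield a polyhedral structure in dimension $d-1$ whose faces are in dimension-preserving bijection with the faces of $P$ that contribute to the sum.

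The main obstacle is twofold. First, the precise construction of $Q$ and the bijection between its faces and the contributing faces of $P$ must be pinned down, requiring careful book-keeping of how several cones $F-N(P,F)$ can meet along $z$. Second, in order for the inductive hypothesis to yield that the sum vanishes (via $\varphi_Q\equiv 0$), one must argue that the resulting $Q$ is never a linear subspace---or, if it would be, that the sum vanishes for a different combinatorial reason tied to $P$ itself. Once these two points are resolved the induction is complete.
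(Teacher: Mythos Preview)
Your outline has the right skeleton (positive homogeneity, the Euler relation at $0$, induction on dimension), but the part you yourself flag as ``the main obstacle'' is precisely the substance of the proof, and your proposed resolution does not work as stated.

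First, the difference you write down,
\[
\varphi_P(z)-\varphi_P(z+\varepsilon v)=\sum_{F\in J(z,v)}(-1)^{\Dim F},
\]
is a single integer, not a value of some $\varphi_Q$. To invoke the induction hypothesis you would have to realise this number as $\varphi_Q(w_1)-\varphi_Q(w_2)$ for two points in a space of dimension $\le d-1$, not as $\varphi_Q(w)$ alone. You have not indicated how either $Q$ or the points would arise.

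Second, and more seriously, there is in general no \emph{single} lower-dimensional cone $Q$ encoding the link at $z$. The collection $\{F-N(P,F):F\in\FF(P)\}$ is not the face fan of any one polyhedral object; the sets overlap with multiplicities. The paper's approach is exactly to untangle this: the faces $F$ with $z\in\partial(F-N(P,F))$ are shown to fall into \emph{disjoint} intervals $I(G,H)=\{F:G\subset F\subset H\}$, one for each pair $(G,H)$ with $z\in\relint G-\relint N(P,H)$, and for each such pair a separate cone $H^*\subset L_3:=\BL(G)^\perp\cap\BL(H)$ is constructed with $\dim H^*=\dim H-\dim G$. The crucial technical lemma then identifies the \emph{function} $w\mapsto\sum_{F\in I(G,H)}(-1)^{\Dim F}\I_{F-N(P,F)}(x+w)$ with $\varphi_{H^*}(w)$ on $L_3$ near $0$, so that the induction hypothesis (applied to $H^*$ in $L_3$, of dimension $<d$ once one checks $z\ne 0$ forces $\dim G>0$ or $\dim H<d$) yields local constancy of each piece. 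Your single-$Q$ ansatz collapses all these intervals together and loses the structure needed to apply the hypothesis.

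Finally, your appeal to the McMullen--Schneider--Glasauer almost-everywhere result is unnecessary once the local argument is in place: local constancy at \emph{every} point already gives global constancy on the connected set $\R^d$, with no need to first know the value on a dense open set.
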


Once Claim~\ref{claim:2} has been established, it follows by induction over $d$ that $\varphi_P$ is constant for every polyhedral cone in any dimension. The induction assumption in dimension $d=1$ is easily checked because there are just the following cones: $\{0\}$, $[0,\infty)$, $(-\infty,0]$, $\R$. In fact, for a polyhedral cone $P\subset\R^d$  which is not a linear subspace,  we even have $\varphi_P\equiv 0$ because
\begin{equation}\label{eq:euler_for_cones}
\varphi_P(0) = \sum_{F\in\FF(P)}(-1)^{\Dim F}\I_{F-N(P,F)}(0) = \sum_{F\in\FF(P)}(-1)^{\Dim F}
= 0
\end{equation}
by the Euler relation for polyhedral cones; see, e.g., \cite[Theorem~2.1]{AmelLotz}.  Given that $\varphi_P$ is constant for polyhedral cones, Claim~\ref{claim:1} implies that $\varphi_P$ is constant for all polyhedral sets. It remains to determine the value of this constant, which is done by exhibiting at least one point for which an explicit computation is possible.

\vspace*{2mm}
\noindent
\textit{Case 1.}
Let $P$ be a nonempty bounded polyhedral set. Take any vertex $p$ of $P$ and let $u\in \inter N(P,\{p\})$.
We claim that for each $F\in \FF(P)$, $F\neq\{p\}$, there is some $\lambda_F>0$ such that  $p-\lambda u\notin
F-N(P,F)$ for $\lambda>\lambda_F$. If not, then there is an increasing sequence $(\lambda_i)_{i\in\N}$ in $(0,\infty)$ with
$\lambda_i\to\infty$ for $i\to\infty$ such
that $p-\lambda_i u=f_{i}-u_{i}$ with $f_i\in F$ and $u_{i}\in N(P,F)$, for all $i\in\N$. Hence
$u-u_{i}/\lambda_{i}=(p-f_{i})/\lambda_i\to 0$
as $i\to\infty$ (here we use that $F\subset P$ is bounded). Since $u\in \inter N(P,\{p\})$, it follows that $u_{i}\in
\lambda_i\inter N(P,\{p\})=\inter N(P,\{p\})$ if $i$ is large enough. Since $\inter N(P,\{p\})\cap N(P,F)=\varnothing$ if $F\neq \{p\}$, we arrive
at a contradiction. Hence, choosing $\lambda>0$ larger than $\max\{\lambda_F:F\in \FF(P), F\neq\{p\}\}$, we get
$\varphi_P(p-\lambda u)=(-1)^{0}=1$.

\vspace*{2mm}
\noindent
\textit{Case 2.}
Let $P$ be an unbounded, line-free polyhedral set. Then there are points $a_1,\ldots,a_k\in\R^d$ and
vectors $b_1,\ldots,b_m\in
\R^d\setminus\{0\}$ such that
$$
P=\text{conv}\{a_1,\ldots,a_k\}+\text{pos}\{b_1,\ldots,b_m\},
$$
where $k,m\in\N$ (see \cite[Theorem 19.1]{Rock} or~\cite[7.3(d) on p.~149]{Pad}). Then $C:=\text{pos}\{b_1,\ldots,b_m\}$ is a closed convex cone and $C^*:=C\setminus\{0\}$ is nonempty and
convex, since $P$ is line-free. The reflected polar cone $-C^\circ$ of $C$ is also convex and nonempty. We have $C\cap
(-C^\circ)\neq \{0\}$, since otherwise $C^*\cap   (-C^\circ)=\varnothing$ and a separation argument then yields a vector $u\neq 0$ such that $C^*\subset u^-:=\{z\in\R^d:\langle z,u\rangle\le 0\}$ and $C^\circ\subset u^-$. But then $u\in C^\circ\subset u^-$, and thus $u=0$, a contradiction. Hence there is some $y\in C^\circ \setminus\{0\}$
with $-y\in C$. Alternatively, the fact that $C\cap (-C^\circ)\neq \{0\}$  could be deduced from the generalized Farkas lemma~\cite[Lemma~2.3]{AmelLotz}.

We can assume that $\langle a_1,y\rangle=\max\{\langle a_i,y\rangle:i=1,\ldots,k\}$. From $y\in C^\circ$  we conclude that
$\langle y,b_j\rangle \le 0$ for $j=1,\ldots,m$. Moreover, if $p\in P$ and $u\in N(P,p)$, then
$\langle p-(a_1-\lambda y),u\rangle\ge 0$ for all $\lambda\ge 0$, hence $\langle p-a_1,u\rangle+\lambda\langle y,u\rangle\ge 0$
for all $\lambda\ge 0$ which implies that $\langle y,u\rangle\ge 0$.

Now we can conclude that $a_1+y\notin F-N(P,F)$ for all $F\in \mathcal{F}(P)$. In fact, assume that there are
$F\in \mathcal{F}(P)$, $f\in F$ and $u\in N(P,F)$ such that
$a_1+y=f-u$. Since $f\in P$, there are $\lambda_i,\mu_j\ge 0$ for $i=1,\ldots,k$ and $j=1,\ldots,m$ with $\lambda_1+\cdots+\lambda_k=1$ such that $f=\sum_{i=1}^k\lambda_ia_i+\sum_{j=1}^m\mu_jb_j$. Now we get
$$
\langle f-u,y\rangle=\sum_{i=1}^k\lambda_i\langle a_i,y\rangle+\sum_{j=1}^m\mu_j\langle b_j,y\rangle-\langle u,y\rangle\\
\le \langle a_1,y\rangle< \langle a_1+y,y\rangle,
$$
a contradiction.

It follows that $a_1+y$ is not contained in a set of the form $F-N(P,F)$, $F\in\FF (P)$, and hence, $\varphi_P(a_1+y) = 0$.

\vspace*{2mm}
\noindent
\textit{Case 3.}
Finally, if $P$ is not line-free, then $P=P_0+U_P$, where $P_0$ is a line-free polyhedral set and $U_P\subset\BL(P)$
is the lineality space of $P$. We can choose $P_0$ such that $\BL(P_0)= \BL(P)\cap U_P^\perp$
and hence $U_P=\BL(P_0)^\perp\cap \BL(P)$. First, we observe that
$F\in\mathcal{F}(P)$ if and only if there is some (uniquely determined) $F_0\in \mathcal{F}(P_0)$ with $F=F_0+U_P$.
Conversely, $F_0+U_P\in\mathcal{F}(P)$ for every $F_0\in\mathcal{F}(P_0)$. Let
$N_{\BL(P_0)}(P_0,F_0)\subset\BL(P_0)= U_P^\perp\cap\BL(P)=(U_P+\BL(P)^\perp)^\perp$ be
the normal cone of $P_0$ at its face $F_0$ with respect to $\aff P_0$ as the ambient space.
In this situation, using \cite[Theorem 2.2.1 (a)]{S14} for the second and \cite[(2.5)]{S14}
for the third equation,
we get
\begin{align*}
N(P,F)&=N(P_0+U_P,F_0+U_P)=N(P_0,F_0)\cap U_P^\perp\\
&=(N_{\BL(P_0)}(P_0,F_0)+\BL(P_0)^\perp)\cap U_P^\perp\\
&=(N_{\BL(P_0)}(P_0,F_0)+\BL(P)^\perp+U_P)\cap U_P^\perp\\
&=N_{\BL(P_0)}(P_0,F_0)+\BL(P)^\perp\subset U_P^\perp.
\end{align*}
If  $x\in\R^d$,
then there are uniquely determined $x_0\in\aff P_0$, $u\in U_P$, and $v\in\BL(P)^\perp$ such that $x=x_0+u+v$, and hence
\begin{align*}
\varphi_P(x)&=\sum_{F\in \FF(P)}(-1)^{\dim F} \I_{\relint F - N(P,F)}(x)\\
&=\sum_{F_0\in \FF(P_0)}(-1)^{\dim F_0+\dim U_P} \I_{\relint F_0+U_P - N_{\BL(P_0)}(P_0,F_0)+\BL(P)^\perp}(x_0+u+v)\\
&=(-1)^{\dim U_P}\sum_{F_0\in \FF(P_0)}(-1)^{\dim F_0} \I_{\relint F_0 - N_{\BL(P_0)}(P_0,F_0)}(x_0).
\end{align*}
The assertion now follows from the preceding two cases.

\bigskip

In the following we prove Claims~\ref{claim:1} and Claim~\ref{claim:2}. The proofs of both claims will be parallel.

\vspace*{2mm}
\noindent
\textsc{Step 1.} In the following, let $P\subset \R^d$ be a  polyhedral set. Let $x\in\R^d$ be arbitrarily chosen. Our aim is to show that $\varphi_P$ is constant in a sufficiently small neighbourhood of $x$. We start with some preparations, with the aim of splitting the summation involved in $\varphi_P$ into two parts, one of which is easy to treat.
If $F\in\FF(P)$ is such that $x\notin \partial(F-N(P,F))$,
then
$$
\I_{F-N(P,F)}(x)=\I_{F-N(P,F)}(y) \quad \text{ for all } y\in B(x,\varepsilon),
$$
provided $\varepsilon>0$ is small enough. To see this, observe that the set $\R^d\setminus\partial(F-N(P,F))$ is open.

\vspace*{2mm}
\noindent
\textsc{Step 2.} Now we suppose  $x\in \partial(F-N(P,F))$ for some $F\in\FF(P)$. Our aim is to show that the set
\begin{equation}\label{eq:def_S_x}
S_x:=\{(G,H)\in\FF(P)\times \FF(P)\colon  G\subset H, G\neq H,x\in \relint G-\relint N(P,H)\}
\end{equation}
is nonempty.
 By basic properties of
faces of closed convex sets (see \cite[Sec.~2.1, Theorem~2.1.2]{S14}) we have a disjoint decomposition
$$
F=\bigcupdot_{\substack{G\in \FF(P)\\ G\subset F}}\relint G.
$$
It follows from \cite[Theorem 2.1.2]{S14} and \cite[Proposition 1]{LuRobinson08}, based on \cite{Scholtes94} or its  reprint \cite[Chapters 2.1 and 2.4]{Scholtes2012}, that
$$
N(P,F)=\bigcupdot_{\substack{H\in \FF(P)\\ F\subset H}}\relint N(P,H).
$$
For the sake of convenience we provide another argument based on duality. For this, let $W$ be a polytope with
$\inter W\cap \relint F\neq \varnothing$. By \cite[Theorem 2.2.1 (b)]{S14}, we have
$
N(P,F)=N(P\cap W,F\cap W)$ and $ N(P,H)=N(P\cap W,H\cap W)$
for any $H\in\mathcal{F}(P)$ with $F\subset H$. Clearly, $H\cap W\in\mathcal{F}(P\cap W)$ and any face of $P\cap W$
is of this form.  Then, using also \cite[(2.5)]{S14} twice and writing $L=\BL(P)$, we get
\begin{align*}
N(P,F)&=N(P\cap W,F\cap W)=N_L(P\cap W,F\cap W)+L^\perp\\
&=\bigcupdot_{\substack{H\in \FF(P)\\ F\subset H}}\relint N_L(P\cap W,H\cap W) +L^\perp\\
&=\bigcupdot_{\substack{H\in \FF(P)\\ F\subset H}}\relint (N_L(P\cap W,H\cap W) +L^\perp)\\
&=\bigcupdot_{\substack{H\in \FF(P)\\ F\subset H}}\relint N(P\cap W,H\cap W)\\
&=\bigcupdot_{\substack{H\in \FF(P)\\ F\subset H}}\relint N(P,H),
\end{align*}
where the main step is the third equality which is the reduction to full dimensional polytopes. To prove the
assertion in this special case, we use \cite[Theorem 2.4.9]{S14} to see that
$$
N(P,F)=\pos\{u_i:F\subset F_i,i\in[n]\},
$$
where $[n]=\{1,\ldots,n\}$, $F_1,\ldots,F_n$ are the facets of $P$ and $u_i\in\mathbb{S}^{d-1}$ is
the exterior unit normal vector of $F_i$ for $i=1,\ldots,n$. The proof of \cite[Theorem 2.4.9]{S14} also
shows that $S\in\mathcal{F}(N(P,F))$ if and only if there is a set $J\subset I=\{i\in[n]:F\subset F_i\}$
with $S=\pos\{u_j:j\in J\}$, and then $H:=\cap \{F_j:j\in J\}$ is the uniquely determined face of $P$ with
$F\subset H$ and $S=N(P,H)$. Conversely, if $H\in\mathcal{F}(P)$ with $F\subset H$  then $N(P,H)$ is a face of
$N(P,F)$. Now the assertion follows from \cite[Theorem 2.1.2]{S14}.

Since $\BL(F)$ and $\BL(N(P,F))$ are complementary linear subspaces, it follows that
\begin{equation}\label{eq:F_N_P_F_faces}
F-N(P,F)=\bigcupdot_{\substack{G,H\in \FF(P)\\ G\subset F\subset H}}\left( \relint G-\relint N(P,H)\right).
\end{equation}
is the decomposition of the $d$-dimensional polyhedral set $F-N(P,F)$ into the relative interiors of its faces. Here we use that $\relint (A+B)=\relint A+\relint B$ for all convex sets $A,B\subset \R^d$
(see \cite[Corollary 6.6.2]{Rock}). In particular,
$$\inter(F-N(P,F))=\relint F-\relint N(P,F)$$
and
$$
\partial(F-N(P,F))=\bigcupdot_{\substack{G,H\in \FF(P)\\ G\subset F\subset H, G\neq H}}\left( \relint G-\relint N(P,H)\right).
$$
It follows that the set $S_x$ is nonempty.

\vspace*{2mm}
\noindent
\textsc{Step 3.}
For $G,H\in\FF(P)$ with $G\subset H$ we consider the ``interval''
$$
I(G,H):=\{F\in \FF(P):G\subset F\subset H\}.
$$

\begin{lemma}\label{Lem1}
If $(G_1,H_1)\in S_x$, $(G_2,H_2)\in S_x$ and $(G_1,H_1)\neq (G_2,H_2)$, then
$$
I(G_1,H_1)\cap I(G_2,H_2)=\varnothing.
$$
\end{lemma}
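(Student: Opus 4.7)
The plan is to derive the lemma directly from the disjointness of the decomposition~\eqref{eq:F_N_P_F_faces} established in Step~2.

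The starting observation is that for any $F\in\FF(P)$, the identity~\eqref{eq:F_N_P_F_faces} expresses $F-N(P,F)$ as a \emph{disjoint} union of sets of the form $\relint G-\relint N(P,H)$ indexed by pairs $(G,H)$ with $G\subset F\subset H$. In other words, the pair $(G,H)$ can be recovered from any point $z\in F-N(P,F)$: there is exactly one such pair with $z\in\relint G-\relint N(P,H)$.

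To prove the lemma I would argue by contradiction. Suppose there exists $F\in I(G_1,H_1)\cap I(G_2,H_2)$. Then both inclusion chains $G_1\subset F\subset H_1$ and $G_2\subset F\subset H_2$ hold, so both pairs $(G_1,H_1)$ and $(G_2,H_2)$ index cells of the decomposition~\eqref{eq:F_N_P_F_faces} of the same set $F-N(P,F)$. By the definition of $S_x$, the point $x$ belongs to $\relint G_1-\relint N(P,H_1)$ and simultaneously to $\relint G_2-\relint N(P,H_2)$. Disjointness of the union forces $(G_1,H_1)=(G_2,H_2)$, which contradicts the hypothesis that the two pairs are distinct. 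Hence $I(G_1,H_1)\cap I(G_2,H_2)=\varnothing$.

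I do not anticipate any real obstacle: all the work is concentrated in the face/normal-cone decomposition of Step~2, and the lemma is essentially its bookkeeping corollary. The only point to verify carefully is that~\eqref{eq:F_N_P_F_faces} is indeed a \emph{disjoint} union, which is precisely what Step~2 establishes via the complementarity of $\BL(F)$ and $\BL(N(P,F))$ combined with the formula $\relint(A+B)=\relint A+\relint B$.
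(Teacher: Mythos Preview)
Your proof is correct and follows essentially the same route as the paper's. The only cosmetic difference is that the paper re-derives inside the lemma the uniqueness of the splitting $x=g-v$ with $g\in F$, $v\in N(P,F)$ (via $\BL(F)\cap\BL(N(P,F))=\{0\}$) and then invokes the disjointness of the face decompositions of $F$ and of $N(P,F)$ separately, whereas you invoke the disjointness of~\eqref{eq:F_N_P_F_faces} in one stroke; both arguments rest on the same complementarity observation from Step~2.
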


\begin{proof}
We proceed by contradiction. For this we assume that there is  a face $F\in I(G_1,H_1)\cap I(G_2,H_2)$. Then
$G_i\subset F\subset H_i$ and
$$
x\in \relint G_i-\relint N(P,H_i)\subset F-N(P,F),
$$
for $i=1,2$.
Hence $x=g_i-v_i$  with $g_i\in \relint G_i\subset F$ and $v_i\in \relint N(P,H_i)\subset N(P,F)$ for $i=1,2$. From
$g_1-g_2=v_1-v_2\in \BL(F)\cap \BL(N(P,F))=\{0\}$
we see that $g_1=g_2$ and $v_1=v_2$. This implies that $\relint G_1=\relint G_2$ and
$\relint N(P,H_1)=\relint N(P,H_2)$, and thus $G_1=G_2$ and $H_1=H_2$.
\end{proof}

\vspace*{2mm}
\noindent
\textsc{Step 4.}
After these preparations, for $(G,H)\in S_x$ and $y\in\R^d$, we define
$$
\varphi_{G,H}(y):=\sum_{F\in I(G,H)}(-1)^{\dim F}\I_{F-N(P,F)}(y).
$$
Then Lemma \ref{Lem1} implies that
$$
\varphi_P(y)=\sum_{(G,H)\in S_x}\varphi_{G,H}(y)+\sum_{F\in C_x}(-1)^{\dim F}\I_{F-N(P,F)}(y),
$$
where
$$
C_x:=\FF(P)\setminus\bigcupdot_{(G,H)\in S_x}I(G,H).
$$
If $F\in C_x$, then $x\notin \partial (F-N(P,F))$ as we have seen in Step~2, and therefore $\I_{F-N(P,F)}$ is
constant in a sufficiently small neighbourhood of $x$ by Step~1.
Therefore, it is sufficient to show that for all $(G,H)\in S_x$ the
function $\varphi_{G,H}$ is constant in a sufficiently small neighbourhood of $x$.

\vspace*{2mm}
\noindent
\textsc{Step 5.}
So let $(G,H)\in S_x$ be fixed and consider
$\varphi_{G,H}(y)$ for $y\in B(x,\varepsilon)$.
Put $L_1:=\BL(G)$, $L_2:=\BL(N(P,H))$ and $L_3:=(L_1+L_2)^\perp=L_1^\perp\cap \BL(H)$. Thus, we have an orthogonal decomposition $\R^d = L_1 + L_2 + L_3$.
Further, recalling~\eqref{eq:def_S_x}, we have
$x=x_1+x_2$ with uniquely determined $x_1\in\relint G$ and $x_2\in -\relint N(P,H)$.
If $y\in\R^d$ and $y\in B(x,\varepsilon)$, then $y=x+\Delta_1+\Delta_2+\Delta_3$ with $\Delta_i\in L_i$ and $\|\Delta_i\|\le \varepsilon$
for $i=1,2,3$.

\begin{lemma}\label{Lem0}
Let $T\subset\R^d$ be a polyhedral set, $C$ a face of $T$ and $z\in\relint C$. Then there is some $\varepsilon>0$
such that $\I_T(z_1)=\I_T(z_2)$ for all $z_1,z_2\in B(z,\varepsilon)$ with $z_2-z_1\in \BL(C)$.
\end{lemma}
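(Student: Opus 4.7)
My plan is to work with a half-space representation of $T$, split the defining constraints into those active and inactive at $z$, and then observe that moving parallel to $\BL(C)$ leaves the active linear functionals unchanged, while shrinking $\varepsilon$ keeps the inactive constraints strictly satisfied.

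More concretely, I would write
$$
T=\{x\in\R^d\colon \langle u_i,x\rangle \le b_i,\ i=1,\ldots,m\},
$$
and put $I_{=}:=\{i\colon \langle u_i,z\rangle=b_i\}$ and $I_{<}:=\{i\colon \langle u_i,z\rangle<b_i\}$. The first key input is the standard face description for polyhedral sets: since $z\in\relint C$ and $C\in\FF(T)$, the face $C$ equals $T\cap\bigcap_{i\in I_{=}}\{x\colon \langle u_i,x\rangle=b_i\}$, so $\aff C$ is contained in this affine subspace and hence
$$
\BL(C)\subset\{u_i\colon i\in I_{=}\}^{\perp}.
$$
This is the only structural fact about faces of polyhedral sets that I need.

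Next, using that $I_{<}$ is finite and each strict inequality is open, I choose $\varepsilon>0$ small enough that
$\langle u_i,y\rangle<b_i$ holds for every $i\in I_{<}$ and every $y\in B(z,\varepsilon)$; this is possible because $z$ satisfies these inequalities strictly. Now given $z_1,z_2\in B(z,\varepsilon)$ with $v:=z_2-z_1\in \BL(C)$, for $i\in I_{<}$ both $z_1,z_2$ satisfy constraint $i$ by the choice of $\varepsilon$, while for $i\in I_{=}$ we have $\langle u_i,v\rangle=0$ by the displayed inclusion above, so $\langle u_i,z_2\rangle=\langle u_i,z_1\rangle$ and thus the $i$th constraint is satisfied at $z_1$ iff it is satisfied at $z_2$. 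Hence $z_1\in T\iff z_2\in T$, which is the claim.

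There is no real obstacle here; the only point to be careful about is invoking the correct face characterization of polyhedral sets in terms of active constraints at a relatively interior point, which is a standard fact (and is exactly what distinguishes polyhedral sets from general closed convex sets and makes the lemma true in this form).
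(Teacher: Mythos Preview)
Your argument is correct and follows essentially the same route as the paper's own proof: both write $T$ as a finite intersection of half-spaces, split the constraints into those active and inactive at $z\in\relint C$, shrink $\varepsilon$ so that the inactive constraints remain strictly satisfied on $B(z,\varepsilon)$, and use that $\BL(C)$ is orthogonal to the active normals to conclude that the active linear functionals take the same value at $z_1$ and $z_2$. The only cosmetic difference is that the paper reorders the indices so that the active constraints are the last $m-l$, whereas you keep the index sets $I_{=}$ and $I_{<}$ explicit.
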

\begin{proof}
The system of inequalities defining $P$ can be written in the form
$$
\langle u_1, y\rangle \leq \alpha_1,\ldots,   \langle u_m, y\rangle \leq \alpha_m
$$
for some $u_1,\ldots,u_m\in\R^d\setminus\{0\}$ and $\alpha_1,\ldots,\alpha_m\in\R$. Faces of $T$ are obtained by turning some of these inequalities into equalities. Without loss of generality, let us assume that $\relint C$ is given by
$$
\langle u_1, y\rangle < \alpha_1,\ldots,   \langle u_l, y\rangle < \alpha_l, \quad
\langle u_{l+1}, y\rangle = \alpha_{l+1}, \ldots,   \langle u_{m}, y\rangle = \alpha_m,
$$
for some $0\leq l\leq m$. Note that ignoring the strict inequalities, we obtain a system defining $\aff C$. Let now $z_1,z_2$ be as in the statement of the lemma. Since linear functions are continuous, we can find some $\varepsilon>0$ such that for all $z_1,z_2\in B(z,\varepsilon)$ we have
$$
\langle u_1, z_i\rangle < \alpha_1,\ldots,   \langle u_l, z_i\rangle < \alpha_l, \quad i=1,2.
$$
Since $z_1-z_2\in \BL(C)$, we also have
$$
\langle u_{l+1}, z_1\rangle = \langle u_{l+1}, z_2\rangle, \ldots,   \langle u_{m}, z_1\rangle = \langle u_{m}, z_2\rangle,
$$
which implies that $z_1\in T$ if and only if $z_2\in T$.
\end{proof}

We apply Lemma \ref{Lem0} with $T=F-N(P,F)$, $C=G-N(P,H)$ (which is a face of $T$ by~\eqref{eq:F_N_P_F_faces}), $z=x$, $z_1=x+\Delta_3$ and $z_2=y$,
where $z_2-z_1=\Delta_1+\Delta_2\in \BL(C)=L_1+L_2$ and $z_1,z_2\in B(x,\varepsilon)$. We conclude that
$\I_{F-N(P,F)}(x+\Delta_3)=\I_{F-N(P,F)}(y)$. This shows that $\varphi_{G,H}(x+\Delta_3)=\varphi_{G,H}(y)$ whenever
$y\in B(x,\varepsilon)$ and $\varepsilon>0$ is small enough.

\vspace*{2mm}
\noindent
\textsc{Step 6.}
In order to complete the proof that
$\varphi_{G,H}(x+w)$  is independent of $w\in L_3$ provided $\|w\|\le \varepsilon$ and $\varepsilon>0$ is sufficiently small, we need some preparation. Let $F$ be any face in $I(G,H)$.
Then we consider the polyhedral cones
\begin{align*}
F^*&:=\pos\left((F-x_1)\cap \BL(G)^\perp\right)=\bigcup_{t>0}\left(t(F-x_1)\cap \BL(G)^\perp\right) \subset
\BL(F)\cap\BL(G)^\perp\subset L_3,\\
H^*&:=\pos\left((H-x_1)\cap \BL(G)^\perp\right)\subset L_3.
\end{align*}
The idea is that we factor $G$ out and that in a small neighborhood of $x_1$, faces look like cones. 
We claim that $F^*$ is a face of the cone $H^*$ and, conversely, all faces of $H^*$ are of the form $F^*$ 
for some $F\in I(G,H)$.
Also,  $\dim F^*=\dim F-\dim G$, $\dim H^*=\dim H-\dim G$. Let $N_{L_3}(H^*,F^*)$ denote the normal
cone of $H^*$ at its face $F^*$ with respect to $L_3$ as the
ambient space.

In order to verify the preceding statements, we put
$$
H^-(u,\alpha):=\{z\in\R^d:\langle z,u\rangle \le \alpha\} \quad\text{and}\quad
  H(u,\alpha):=\{z\in\R^d:\langle z,u\rangle = \alpha\},
$$
that is $ H(u,\alpha)=\partial H^-(u,\alpha)$,
for $u\in\R^d\setminus\{0\}$ and $\alpha\in\R$. The polyhedral {\red set} $P$ is given in the form
$$
P=\bigcap_{i=1}^n H^-(u_i,\alpha_i),
$$
for suitable $n\in\N$, $u_1,\ldots,u_n\in\R^{d}\setminus\{0\}$ and $\alpha_1,\ldots,\alpha_n\in\R$, since
we can assume that $P\neq\R^d$ and hence $n\neq 0$ (the case $P=\R^d$ is trivial). In the following,
intersections over an empty index set are interpreted as $\R^d$. Let
$F$ be a face of $P$.  Then we put $I_F:=\{i\in[n]: F\subset H(u_i,\alpha_i)\}$, $J_F:=\{j\in[n]: G\subset H(u_j,\alpha_j), F\not\subset H(u_j,\alpha_j)\}$, and $R_F:=\{l\in[n]: G\not\subset H(u_l,\alpha_l)\}$. Then
\begin{align*}
F^*&=\bigcup_{t>0}\left[t\left(\bigcap_{i\in I_F}H(u_i,\alpha_i)\cap\bigcap_{j\in J_F}H^-(u_j,\alpha_j)\cap\bigcap_{l\in R_F}H^-(u_l,\alpha_l)-x_1\right)\cap\BL(G)^\perp\right]\\
&=\bigcup_{t>0}\left[\bigcap_{i\in I_F}H(u_i,0)\cap  \bigcap_{j\in J_F}H^-(u_j,0)\cap  \bigcap_{l\in R_F}H^-(u_l,t(\alpha_l-\langle x_1,u_l\rangle) )\cap\BL(G)^\perp\right]\\
&=\bigcap_{i\in I_F}H(u_i,0)\cap  \bigcap_{j\in J_F}H^-(u_j,0)\cap  \bigcup_{t>0}\bigcap_{l\in R_F}H^-(u_l,t(\alpha_l-\langle x_1,u_l\rangle) )\cap\BL(G)^\perp\\
&=\bigcap_{i\in I_F}H(u_i,0)\cap  \bigcap_{j\in J_F}H^-(u_j,0)\cap \BL(G)^\perp\\
&=\bigcap_{j\in J_F}H^-(u_j,0)\cap \BL(F)\cap \BL(G)^\perp,
\end{align*}
where we used that $\langle x_1,u_i\rangle=\alpha_i$ for $i\in I_F\cup J_F$ and $\langle x_1,u_l\rangle<\alpha_l$ for $l\in R_F$.
Similarly, we obtain
\begin{equation}\label{eqnH1}
H^*=\bigcap_{j\in J_H}H^-(u_j,0)\cap \BL(H)\cap \BL(G)^\perp,
\end{equation}
where $J_H:=\{j\in[n]: G\subset H(u_j,\alpha_j), H\not\subset H(u_j,\alpha_j)\}$.
Thus, $J_F\subset J_H$ and $F^*$ is a face of $H^*$ which is obtained from $H^*$ by
turning some of the defining inequalities in \eqref{eqnH1} into equalities.
Conversely, all faces of $H^*$ arise in this way.

\begin{lemma}\label{Lem2}
If $\varepsilon>0$ is sufficiently small and $w\in L_3$ with $\|w\|\le \varepsilon$, then
$x+w\in F-N(P,F)$ if and only if $w\in F^*-N_{L_3}(H^*,F^*)$. Consequently, if $\varepsilon>0$ is
sufficiently small, then $\varphi_{G,H}(x+w) = \varphi_{H^*}(w)$.
\end{lemma}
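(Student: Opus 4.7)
The plan is to pass from the relation $x+w\in F-N(P,F)$ to a statement about the tangent cones $T_F$ of $F$ at $x_1\in\relint G$ and $T'_F$ of $N(P,F)$ at $-x_2\in\relint N(P,H)$, and then to recognise the resulting condition as membership in $F^*-N_{L_3}(H^*,F^*)$. By polyhedrality, for sufficiently small $\varepsilon>0$ one has $F\cap B(x_1,3\varepsilon)=(x_1+T_F)\cap B(x_1,3\varepsilon)$ and $N(P,F)\cap B(-x_2,3\varepsilon)=(-x_2+T'_F)\cap B(-x_2,3\varepsilon)$. Since $x_1\in\relint G$, the tangent cone $T_F$ has lineality space $L_1=\BL(G)$, and the very definition of $F^*$ in Step~6 gives the orthogonal decomposition $T_F=L_1+F^*$ with $F^*\subset L_3\cap\BL(F)$. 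An entirely analogous argument applied to the polyhedral cone $N(P,F)$ and the face $N(P,H)$ yields $T'_F=L_2+N_F^*$ with $N_F^*:=T'_F\cap L_2^\perp$; using $N(P,F)\subset\BL(F)^\perp$ and a dimension count one checks that $\BL(F)^\perp\cap L_2^\perp=L_3\cap\BL(F)^\perp$, so $N_F^*\subset L_3$. For $w\in L_3$ with $\|w\|\le\varepsilon$, the condition $x+w\in F-N(P,F)$ becomes the existence of $\Delta f\in T_F$ and $\Delta v\in T'_F$ with $w=\Delta f-\Delta v$. Writing $\Delta f=f_1+f^*$ with $f_1\in L_1$, $f^*\in F^*$ and $\Delta v=v_2+v^*$ with $v_2\in L_2$, $v^*\in N_F^*$, and projecting the equation $w=\Delta f-\Delta v$ onto $L_1$, $L_2$, $L_3$ separately, one is forced to take $f_1=0$, $v_2=0$, and $w=f^*-v^*$. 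Hence the condition is equivalent to $w\in F^*-N_F^*$.

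It therefore remains to identify $N_F^*$ with $N_{L_3}(H^*,F^*)$. Using the $H$-representation $P=\bigcap_{i=1}^n H^-(u_i,\alpha_i)$ and the partition $[n]=I_F\cupdot J_F\cupdot R_F$ from Step~6, I would verify the following. First, for every $j\in J_H$ one has $G\subset H(u_j,\alpha_j)$, so $u_j\perp L_1$ and $u_j\in L_2+L_3$. Second, the combinatorial identity $I_F\setminus I_H=J_H\setminus J_F$ holds, both sides equal to $\{j:F\subset H(u_j,\alpha_j),\ H\not\subset H(u_j,\alpha_j)\}$. Third, from $N(P,F)=\pos\{u_i:i\in I_F\}+\BL(P)^\perp$ and $-x_2\in\relint N(P,H)$ a direct computation gives $T'_F=L_2+\pos\{u_j:j\in I_F\setminus I_H\}$, and intersecting with $L_3$ yields $N_F^*=\pos\{\mathrm{pr}_{L_3}u_j:j\in J_H\setminus J_F\}$. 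On the other hand, from the representation \eqref{eqnH1} the face $F^*$ is obtained from $H^*$ by turning into equalities precisely the inequalities indexed by $j\in J_H\setminus J_F$: for such $j\in I_F$ one has $u_j\perp\BL(F)\supset F^*$, whereas for $j\in J_F$ the linear form $\langle u_j,\cdot\rangle$ takes strictly negative values on $\BL(F)$. This gives $N_{L_3}(H^*,F^*)=\pos\{\mathrm{pr}_{L_3}u_j:j\in J_H\setminus J_F\}$, matching $N_F^*$.

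Finally, summing $(-1)^{\dim F}\I_{F-N(P,F)}(x+w)=(-1)^{\dim F}\I_{F^*-N_{L_3}(H^*,F^*)}(w)$ over $F\in I(G,H)$ and using the dimension-preserving bijection $F\leftrightarrow F^*$ of Step~6 with $\dim F=\dim F^*+\dim G$ yields $\varphi_{G,H}(x+w)=(-1)^{\dim G}\varphi_{H^*}(w)$; the sign factor is harmless for the constancy argument carried out subsequently. The main obstacle I anticipate is the identification $N_F^*=N_{L_3}(H^*,F^*)$, whose heart is the combinatorial identity $I_F\setminus I_H=J_H\setminus J_F$ together with the compatibility of the projection $\mathrm{pr}_{L_3}$ with the face structure of $H^*$.
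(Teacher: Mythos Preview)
Your argument is correct and is a genuinely different route from the paper's proof. The paper treats the two implications separately: for the forward direction it shows directly that $f\in x_1+L_3$ and then runs a chain of polar-cone identities to prove $N_{\BL(H)}(H,F)=N_{L_3}(H^*,F^*)$; for the backward direction it intersects $P$ with a cube $W=f_0+[-1,1]^d$ and argues via the finitely many vertices of $\bar P=P\cap W$. Your approach is more symmetric: you localise both $F$ and $N(P,F)$ simultaneously via their tangent cones at $x_1$ and $-x_2$, use the orthogonal splitting $\R^d=L_1\oplus L_2\oplus L_3$ to strip off the lineality parts, and then identify $N_F^*$ with $N_{L_3}(H^*,F^*)$ combinatorially through the index identity $I_F\setminus I_H=J_H\setminus J_F$. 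This avoids the ad hoc cube cutoff entirely. One point you leave implicit and should make explicit is why the backward direction works for small $\|w\|$: since $F^*\subset\BL(F)\cap L_3$ and $N_F^*\subset\BL(F)^\perp\cap L_3$ lie in orthogonal subspaces, the decomposition $w=f^*-v^*$ is unique with $\|f^*\|,\|v^*\|\le\|w\|$, and only then does the local equality of $F$ with $x_1+T_F$ (and of $N(P,F)$ with $-x_2+T'_F$) apply. Finally, you are right that the ``Consequently'' clause of the lemma should read $\varphi_{G,H}(x+w)=(-1)^{\dim G}\varphi_{H^*}(w)$; the paper's statement drops this sign, which, as you note, is irrelevant for the constancy argument that follows.
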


\begin{proof}
First, assume that $x+w\in F-N(P,F)$. Then there is some $f\in F$ such that
$-(x_1+x_2-f+w)\in N(P,F)\subset \BL(F)^\perp$. Hence $f\in x_1+x_2+w+L(F)^\perp\subset x_1+\BL(G)^\perp$, since
$\BL(G)\subset\BL(F)$, $w\in L_3\subset\BL(G)^\perp$, and $x_2\in \BL(H)^\perp\subset\BL(G)^\perp$. Since also
$f\in x_1+\BL(F)$, we get
%\begin{equation}\label{eqn1}
$$
f\in (x_1+\BL(G)^\perp)\cap (x_1+\BL(F))=x_1+(\BL(G)^\perp\cap\BL(F))\subset x_1 +L_3.
$$
%\end{equation}

We now claim that there is some $f_0\in \relint (F)\cap(x_1+L_3)$.
To see this, we can argue in $\aff F$ and hence assume that $\aff F=\R^d$. Suppose that
$(\inter F) \cap (x_1+L(G)^\perp)=\varnothing$. Then there is a hyperplane $H_0$, bounding the closed
halfspaces $H_0^+,H_0^-$ such that $\inter F\subset H_0^-$ and $x_1+\BL(G)^\perp\subset H_0^+$.
Since $x_1\in G\subset F\subset H_0^-$ and $x_1\in x_1+\BL(G)^\perp\subset H_0^+$, we get $x_1\in H_0$ and thus $x_1+\BL(G)^\perp
\subset H_0$. Moreover, $x_1\in \relint (G)\cap H_0$ and $G\subset F\subset H_0^-$ imply that also $G\subset H_0$.  Thus we arrive
at the contradiction  $d=\dim H_0 = d-1$. 

Choosing $f_0\in\relint(F)\cap (x_1+L_3)$, we get
$$
\langle -(x+w-f),p-f_0\rangle\le 0,\qquad p\in P,
$$
since $-(x-f+w)\in N(P,F)$. Using that $x_2\in \BL(H)^\perp$ and $h-f_0\in\BL(H)$ for $h\in H$ and $f_0\in F\subset H$,
we get
$$
\langle -(x_1+w-f),h-f_0\rangle =\langle -(x+w-f),h-f_0\rangle\le 0
$$
for all $h\in H$.  Thus $x_1+w-f\in -N_{\BL(H)}(H,F)\cap L_3$,  where the lower index indicates the subspace in which the
normal cone is considered. Applying \cite[(2.2)]{S14} in $\BL(H)$ (the polar with respect to $\BL(H)$ of
a cone $C\subset \BL(H)$ is
denoted by $C^\circ_{\BL(H)}$, and similarly for $L_3$) and arguing as in the derivation of \eqref{eqnH1},
we obtain
\begin{align*}
&N_{\BL(H)}(H,F)=\left(\bigcup_{t>0}t(H-f_0)\right)^\circ_{\BL(H)}\\
&\qquad =\left(\bigcap\{H^-(u_i,0)\cap \BL(H):F\subset H(u_i,\alpha_i),i\in[n]\}\right)^\circ_{\BL(H)}\allowdisplaybreaks\\
&\qquad =\left(\BL(G)+\bigcap\{H^-(u_i,0)\cap \BL(H)\cap \BL(G)^\perp:F\subset
H(u_i,\alpha_i),i\in[n]\}\right)^\circ_{\BL(H)}\allowdisplaybreaks\\
&\qquad =\left(\bigcap\{H^-(u_i,0)\cap \BL(H)\cap \BL(G)^\perp:F\subset H(u_i,\alpha_i),i\in[n]\}\right)^\circ_{L_3}\allowdisplaybreaks\\
&\qquad =\left(\bigcap\{H^-(u_i,0)\cap L_3:F\subset H(u_i,\alpha_i),i\in[n]\}\right)^\circ_{L_3}\\
&\qquad =\left(\bigcup_{t>0}t(H^*-(f_0-x_1))\right)^\circ_{L_3}\\
&\qquad =N_{L_3}(H^*,F^*).
\end{align*}
Thus we conclude that $x_1+w-f\in -N_{L_3}(H^*,F^*)$ or
$w\in F^*-N_{L_3}(H^*,F^*)$. (For this direction, we do not have to assume that $\|w\|$ is small.)

Now, we assume that $w\in F^*-N_{L_3}(H^*,F^*)$ and $\|w\|\le \varepsilon$ for a sufficiently small $\varepsilon>0$.
Then $w=f^*-v$, where $f^*\in F^*$ with $\|f^*\|\le \varepsilon$ and $v\in  N_{L_3}(H^*,F^*)$. Further,
$f^*=f_1^*-x_1$ with $f_1^*\in (x_1+\pos(F-x_1))\cap (x_1+L_3)$. If $\|f^*\|\le \varepsilon$ and $\varepsilon>0$
is small enough, then $f_1^*\in F\cap (x_1+L_3)$, hence we have $x_1+w-f_1^*\in -N_{L_3}(H^*,F^*)$ or
$$
\langle -(x_1+w-f_1^*),x_1+h^*-(x_1+f_0^*)\rangle \le 0
$$
for $h^*\in H^*$ and any fixed $f_0^*\in \relint F^*$ which we choose such that $x_1+f_0^*\in \relint F$. Thus
we get
\begin{equation}\label{eqn3}
\langle -(x_1+w-f_1^*),h-f_0\rangle \le 0
\end{equation}
for any $h\in H\cap (x_1+L_3)$ and some (but then also for any) $f_0\in \relint F$. Since
$x_1+w-f^*_1=w-f^*\in \BL(G)^\perp$, \eqref{eqn3} holds for all $h\in H$. Now we put $W=f_0+[-1,1]^d$ and
consider the polytope
$\bar{P}:=P\cap W$. Then $\bar H=H\cap W$ and $\bar F=F\cap W$ are faces of $\bar P$ with $f_0\in \relint \bar F$.
Since $-x_2\in \relint N(P,H)$ and $N(P,H)=N(\bar P,\bar H)$ (by \cite[Theorem 2.2.1 (b)]{S14}), it follows that $\langle -x_2,h-h_0\rangle=0$ for all $h\in\bar H$ and
$ \langle -x_2,p-h_0\rangle<0$ for all $h\in\bar P\setminus \bar H$ (see \cite[(2.26)]{S14}). In particular, writing $\ext\bar P$ for the finite  set of vertices (extreme points) of $\bar P$, we have
$$
\max\{\langle -x_2,e-f_0\rangle:e\in \ext\bar P\setminus \bar H\}=:\varepsilon_1<0,
$$
where we used that $\langle -x_2,e-f_0\rangle=\langle -x_2,e-h_0\rangle$, since $\langle -x_2,f_0-h_0\rangle=0$  and
$\bar F\subset\bar H$. For $e\in\bar P$, we obtain
$$
|\langle -(x_1+w-f_1^*),e-f_0\rangle |\le (\|w\|+\|f^*\|)\|e-f_0\|\le  2\varepsilon\sqrt{d}<-\varepsilon_1,
$$
if $\varepsilon$ is chosen sufficiently small.

If $e\in \ext\bar P\cap \bar H$, then
$$
\langle -(x_1+w-f_1^*)-x_2,e-f_0\rangle=\langle -(x_1+w-f_1^*),e-f_0\rangle+\langle -x_2,e-f_0\rangle \le 0+0=0.
$$
If
$e\in \ext\bar P \setminus \bar H$, then
$$
\langle -(x_1+w-f_1^*)-x_2,e-f_0\rangle=\langle -(x_1+w-f_1^*),e-f_0\rangle+\langle -x_2,e-f_0\rangle \le
-\varepsilon_1+\varepsilon_1=0.
$$
Thus we have
$$
\langle -(x_1+w-f_1^*-x_2),e-f_0\rangle\le 0,
$$
first for all $e\in\bar P$, but then for all $e\in \bar P$, since for any $p\in\bar P$
there are $\lambda_e\ge 0$ with $\sum_{e\in\ext\bar P}\lambda_e=1$ and $p=\sum_{e\in\ext\bar P}\lambda_e e$. Hence
we have $p-f_0= \sum_{e\in\ext\bar P}\lambda_e (e-f_0)$, from which the assertion  follows.

This shows that $-(x+w-f_1^*)\in N(\bar P,\bar F)=N(P,F)$, and therefore
 $x+w\in F-N(P,F)$.
\end{proof}

\vspace*{2mm}
\noindent
\textsc{Step 7.}
Lemma \ref{Lem2} reduces the problem to the case of polyhedral cones, i.e.\ it proves Claim~\ref{claim:1}.
 To prove Claim~\ref{claim:2}, let $P$ be a  polyhedral cone but not a linear subspace. If $x=0$, then $\varphi_P(x)=0$  by the classical Euler relation~\eqref{eq:euler_for_cones}.  If $x\neq 0$, then going through the preceding argument again, we see that we have to show that
$$
\varphi_{H^*}(w) = \sum_{F^*\in\FF(H^*)}(-1)^{\dim F^*}\I_{F^*-N_{H^*}(H^*,F^*)}(w)
$$
is independent of $w\in L_3$. But since $x\neq 0$, $G\subset H$, $G\neq H$ and $x\in \relint G - \relint N(P,H)$,
we must have $\dim G>0$ or $\dim H<d$, since otherwise $0\neq x\in \relint(\{0\})-\relint N(P,P) =\{0\}$, a contradiction. But then $\dim H^*=\dim H - \dim G < d$, so that the induction hypothesis of Claim~\ref{claim:2} can be applied.


\begin{thebibliography}{99}
\bibitem{AmelLotz}
D.\ Amelunxen and M.\ Lotz.  Intrinsic volumes of polyhedral cones: A combinatorial perspective. Preprint: \url{https://arxiv.org/abs/1512.06033}

\bibitem{Cow1}
R.\ Cowan. Identities linking volumes of convex hulls. {\em Adv.\ Appl.\ Probab.}
{\bf 39} (3) (2007), 630–-644.

\bibitem{Cow2}
R.\ Cowan. Recurrence relationships for the mean number of faces and
vertices for random convex hulls.
{\em Discrete Comput.\ Geom.} {\bf 43}(2) (2010), 209–-220.


\bibitem{GlasPhD}
S.\ Glasauer. {\em Integralgeometrie konvexer K\"orper im sph\"arischen Raum}. PhD Thesis, University of Freiburg, 1995. Available at:    \url{http://www.hs-augsburg.de/~glasauer/publ/diss.pdf}

\bibitem{Glas}
S.\ Glasauer.  An Euler-type version of the local Steiner formula for convex bodies.
{\em Bull.\ Lond.\ Math.\ Soc.} {\bf 30}(6) (1998), 618--622.

\bibitem{Hug}
D.\ Hug. {\em Measures, curvatures and currents in convex geometry}. Habilitation thesis,
University of Freiburg, Germany,  1999.

\bibitem{KLZ}
Z.\ Kabluchko, G.\ Last, D.\ Zaporozhets. Inclusion-exclusion principles for convex hulls and the Euler relation. Preprint: \url{https://arxiv.org/abs/1603.01357}

\bibitem{McM}
P.\ McMullen.  Non-linear angle-sum relations for polyhedral cones and polytopes.
{\em Math. Proc. Camb. Phil. Soc.} {\bf 78} (1975), 247--261.

\bibitem{Pad}
M.\ Padberg. {\em Linear optimization and extensions}. 2nd Ed., Springer, Berlin, 1999.

\bibitem{Robinson92}
S.M.\ Robinson. Normal maps induced by linear transformations. {\em Math. Oper. Res.} {\bf 17} (1992), 691--714.


\bibitem{LuRobinson08}
S.\ Lu, S.M.\ Robinson. Normal Fans of Polyhedral Convex Sets.
Structures and Connections. {\em Set-Valued Anal.} {\bf 16} (2008), 281--305.

\bibitem{Rock}
R.T.\ Rockafellar. {\em Convex Analysis}. Princeton Univ.\ Press, Princeton, N.J., 1970.

\bibitem{Scholtes94}
S.\ Scholtes. {\em Introduction to Piecewise Differentiable Equations}. Habilitationsschrift, Institut f\"ur
Statistik und mathematische Wirtschaftstheorie, Universit\"at Fridericiana Karlsruhe, Karlsruhe,
Germany, 1994.

\bibitem{Scholtes2012}
S.\ Scholtes. {\em Introduction to Piecewise Differentiable Equations}. Springer Briefs in Optimization.
Springer, New York, 2012.

\bibitem{Schneider}
R.\ Schneider.  Combinatorial identities for polyhedral cones. {\em St. Petersburg J. Math.} (to appear).
Preprint: \url{http://home.mathematik.uni-freiburg.de/rschnei/Comb.Ident.rev.pdf}


\bibitem{S14}
R.\ Schneider. {\em Convex Bodies: The Brunn-Minkowski Theory}. 2nd Ed., Cambridge University Press, Cambridge, 2014.

\bibitem{SW}
R.\ Schneider, W.\ Weil. {\em Stochastic and Integral Geometry}. Springer, Hei\-del\-berg-New York, 2008.


\bibitem{Ziegler96}
G.M.\ Ziegler. {\em Lectures on Polytopes}.  Graduate Texts in Mathematics. Springer-Verlag, New
York, USA, 2007.

\end{thebibliography}
\end{document}